\documentclass[12pt]{amsart}

\usepackage[margin=1.15in]{geometry}

\usepackage{amsmath,amscd,amssymb,amsfonts,latexsym}
\usepackage{wasysym}
\usepackage{mathrsfs}
\usepackage{mathtools,hhline}
\usepackage{color}
\usepackage{bm}
\usepackage[all, cmtip]{xy}
\usepackage{comment}

\usepackage{url,mathtools,amsmath}

\definecolor{hot}{RGB}{65,105,225}

\usepackage[pagebackref=true,colorlinks=true, linkcolor=hot ,  citecolor=hot, urlcolor=hot]{hyperref}

\theoremstyle{plain}
\newtheorem{theorem}{Theorem}[section]

\newtheorem{lemma}[theorem]{Lemma}

\theoremstyle{definition}
\newtheorem{definition}[theorem]{\sc Definition}
\newtheorem{example}[theorem]{\sc Example}

\numberwithin{equation}{section}

\newcommand\hot{\mathrm{h.o.t.}}

\newcommand\sW{\mathscr{W}}

\newcommand\ity{\infty}

\def\bC{\mathbb{C}}
\def\bP{\mathbb{P}}

\def\m{\setminus}
\def\s{\subset}

\renewcommand{\d}{{\mathrm d}}
\newcommand{\fin}{\hspace*{\fill}$\square$}

\DeclareMathOperator{\Sing}{Sing}                                    
\DeclareMathOperator{\Jac}{Jac}
\DeclareMathOperator{\mult}{mult}

\DeclareMathOperator{\reg}{reg} 
\DeclareMathOperator{\rank}{rank}

\title[]{Local linear Morsifications}

\author{Mihai Tib\u ar}
\address{M. Tib\u{a}r: Universit\' e de  Lille, CNRS, UMR 8524 -- Laboratoire Paul Painlev\'e, F-59000 Lille, France}  
\email {mihai-marius.tibar@univ-lille.fr}
\thanks{We acknowledge partial support from the Labex CEMPI (ANR-11-LABX-0007). }

\keywords{enumerative geometry, Morsification, number of Morse points}
\subjclass[2010]{14N10,  32S30, 55R55, 14C17, 58K05}


\begin{document}

\date{\today}

\begin{abstract}  
The number of Morse points in a Morsification determines the topology of the 
Milnor fibre of a  holomorphic function germ $f$ with isolated singularity. If $f$ has an arbitrary singular locus, then this nice relation to the Milnor fibre disappears. We show that despite this loss, 
 the  numbers of stratified Morse singularities of a general linear Morsification
are effectively computable in terms of topological invariants of $f$. 
\end{abstract}

\maketitle


\section{Introduction}

One  may deform a holomorphic function germ $f: (\bC^{n+1}, 0) \to (\bC, 0)$ in a continuous one-parameter family $f_{\lambda}: (\bC^{n+1},0)  \to (\bC,0)$ with $f_{0}=f$, such that all the singularities of $f_{\lambda}$ close to the origin are Morse, for any small enough $\lambda \not= 0$.  Such a deformation is called \emph{Morsification of $f$} and a typical question one may ask is:
 \emph{how many Morse points  of $f_{\lambda}$ converge to the origin $0 \in \bC^{n+1}$ when $\lambda \to 0$?} 

In case $f$ has \emph{isolated singularity}, Brieskorn showed in \cite[Appendix]{Bri}  that  this number of Morse points is precisely the \emph{Milnor number} of $f$ at 0. 
This concidence is based on the constancy of  the topology of the general fibre of $f_{\lambda}$ inside a fixed ball. 

In the case $f$ has nonisolated singularities,
 while Morsifications still exist, there is no more conservation of the fibre topology and it was unknown whether one can still have some topological control over this phenomenon.
In particular: \emph{can one count the Morse points of a Morsification of $f$ in terms of topological invariants of $f$ only?}

\

This type of question occurs in real geometry for the distance function $d_{u}$  in \cite{DHOST}, and gives rise to the well-known by now \emph{Euclidean distance degree}, abbreviated ED-degree. 
In case the centre $u$ of the distance function $d_{u}$ cannot be chosen 
in a general position, the situation is similar to having an initial function with \emph{non-isolated singularities}, and a linear deformation of it.

  In the global setting of a complex polynomial $P$ on a complex affine variety $X\subset \bC^{N}$,  a topological interpretation  of the ED-degree in terms of Euler obstruction has been found by Maxim,  Rodriguez and Wang in \cite{MRW2018}.
For a linear deformation of a complex polynomial function, they have computed in \cite{MRW5} the number of Morse points  on the regular part $X_{\reg}$ of a singular affine space $X$, under the condition that no Morse point escapes to infinity, in terms of finitely many local multiplicities $n_{V}$ at strata $V\subset \Sing P$.  These multiplicities are not easy to grip; they
have been further investigated in \cite{MT1}. Formulas rely on relatively heavy computations of vanishing cycles based on the Euler obstruction.   A different way of computing the stratified Morse numbers is developed in \cite{MT3} based on polar curve techniques. 

\

We  address here the case of a holomorphic function germ $f$ with nonisolated singularity and remark, first of all, that the number of Morse points clearly depends on the type of deformation. Example: 
 $f(x,y) = x^{3}$. The family $f_{\lambda} = x^{3}-\lambda (x+by)$ for  $b\not= 0$, is a linear Morsification of $f$, and the number of Morse points of $f_{\lambda}$ is equal to zero. 
 If instead of a linear Morsification we consider a quadratic Morsification of $f$, for instance  $F_{\lambda} := f - \lambda (x^{2}+y^{2})$, then $F_{\lambda}$ has two Morse points, for $\lambda \not= 0$.

If we focus to linear Morsifications, i.e. deformations of the type $f_{\lambda} := f -\lambda \ell$ for  some general linear function $\ell$, then the number of Morse points acquires a precise meaning.
Moreover, our setting will be fully  general: let  $(X,0) \subset (\bC^{N},0)$ be a singular analytic set germ, and 
let $f:  (X,0) \to (\bC,0)$ be a holomorphic function germ. For a generic $\ell$, the linear deformation $f_{\lambda}$ is a Morsification,
and the number of stratified Morse points of $f_{\lambda}$ which converge to the origin when $\lambda \to 0$  is stratwise constant (cf.  Section \ref{ss:local} for the definitions).
 We address  here the above question from a radically different viewpoint within the class of linear Morsifications. We
 give a method for computing the  \emph{Morse numbers} $m_{V}(f)$, i.e. the numbers of Morse points which abut to 0 when $\lambda \to 0$  on each stratum $V$ of the canonical stratification $\sW$ of $X$. Our  Theorem \ref{t:main1} shows the following formula in terms of polar multiplicities:
\begin{equation}\label{eq:main}
 m_{V}(f) = \mult_{0}\Bigl(\Gamma_{V}(\ell,f), \{f =0\}\Bigr) - \mult_{0}\Bigl(\Gamma_{V}(\ell,f), \{\ell =0\} \Bigr),
\end{equation}
where $\Gamma_{V}(\ell,f)$ is the generic polar curve of $f$ restricted to the stratum $V$ (cf Section \ref{ss:local} for details). Surprisingly, this formula looks exactly as if  $f$ were with isolated singularity.  There are two known proofs of \eqref{eq:main} in case $f$  has \emph{isolated singularity}: Massey's proof \cite{Ma} with rather involved vanishing cycles computations, and a much shorter one  in \cite{MT2}, by induction and using the comparison between two types of bouquet structure formulas for the Milnor fibre. Both proofs are based on the constancy of the Milnor fibre, already pointed out above, which cannot be exploited anymore when $f$ has nonisolated singularities. The same principle has been used recently by Zach \cite{Za} for computing the Morse numbers $m_{V}(f)$ through a cohomological method, still in case of an isolated singularity. The proof that we propose here is also different than 
the proofs developed \cite{MT3} for a far more general setting;  we hope that it will interest the reader by its effectivity side. 

A simple formula like \eqref{eq:main} in case of a function germ $f$ with higher dimensional singular locus was hardly expected because the fibre topology of $f$ is destroyed by the Morsification $f_{\lambda}$.  A change of paradigm was clearly needed for addressing this setting. 
 
Long time awaited since Brieskorn's result in 1970 for the case of isolated singularities, the solution to the aforementioned   question of computing the stratwise Morse numbers of $f$ in case of nonisolated singularities goes back to the fundamentals and offers a new perspective over the phenomenon of Morsification in the complex setting, together with a new method with far-reaching possibilities of applications due to its effectivity.

\medskip 

\noindent {\bf Acknowlegment.}  The author acknowledges support from the project ``Singularities and Applications'' - CF 132/31.07.2023 funded by the European Union - NextGenerationEU - through Romania's National Recovery and Resilience Plan, and support by the grant CNRS-INSMI-IEA-329.


\section{Linear Morsifications}\label{ss:local}
  Let $(X,0) \s (\bC^{N},0)$ be a singular irreducible space germ  of pure dimension $n+1\ge 2$, and let  $f:(X, 0)\to (\bC, 0)$ be a non-constant holomorphic function germ.

One may endow some small neighbourhood of $X$ with a Whitney stratification with finitely many strata  such that its regular part $X_{\reg}$ is a stratum. The roughest such stratification (with respect to inclusion of strata) exists,  it is called \emph{the canonical Whitney stratification of $X$}, and we will use it here under the notation $\sW$.

 Let $\Sing_{\sW}f :=\bigcup_{V\in \sW} \Sing f_{|V}$ denote the stratified singular locus of $f$ with respect to $\sW$. It is a closed set that we view as a set germ at the origin.


Let us define what we mean by \emph{general linear Morsification} of $f$.
 
\begin{definition}[Stratified Morse function, after Goresky and MacPherson \cite{GM}]\label{d:morsefunction} \ \\
We say that a holomorphic function $h: X\cap B \to \bC$ defined on some small neighbourhood $B$ of 0 is a \emph{stratified Morse function} with respect to the stratification $\sW$ if $h$ has only stratified Morse singularities on the positive dimensional strata of $\sW$, and $h$ is general at $0$.   
\end{definition}

 For some linear function $\ell:\bC^{N}\to \bC$, we consider the 
 map $(\ell,f) \colon (X,0) \to (\bC\times \bC, (0,0))$ and its stratified singular locus $\Sing_{\sW}(\ell,f) := \bigcup_{V\in \sW} \Sing (\ell,f)_{|V}$, where 
 $$\Sing (\ell,f)_{|V} := \{x\in V \mid \rank \Jac(\ell_{|V},f_{|V}) <2 \}.$$
 Then $\Sing_{\sW}(\ell,f)$ is a closed set due to the Whitney regularity of the stratification $\sW$, and we will refer to it as a set germ at the origin. 

 
\begin{definition}[Polar locus]\label{d:polarlocus}
One says that  
$$\Gamma(\ell,f) :=  \overline{ \Sing_{\sW} (\ell,f) \setminus \Sing_{\sW} f } \subset X$$ 
is the polar set of $f$ with respect to the function $\ell$, and we will refer to it as a set germ at the origin.
\end{definition}



 The following fundamental result of Bertini-Sard type goes back to Hamm and L\^{e} \cite{HL} and  Kleiman \cite{Kl}, see also e.g. \cite{Ti-israel}:

\begin{lemma}[Local Polar Curve Lemma]  \label{l:polargen} \ \\
There is a Zariski open dense subset $\Omega'   \subset \check \bP^{N-1}$ such that the polar locus $\Gamma(\ell,f)$ is either a curve for all $\ell \in \Omega'$, or is empty for all $\ell \in \Omega'$.  

In the non-empty case,  there exists moreover a Zariski open subset $\Omega \subset \Omega'$ 
such  that $\Gamma (\ell,f)$ is reduced, and that the restriction $(\ell,f)_{| \Gamma (\ell,f)}$ is  one-to-one. 
\fin
\end{lemma}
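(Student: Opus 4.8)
\emph{The plan is to reduce to a single stratum and then run a Bertini--Sard argument on the parameter space $\check\bP^{N-1}$ of linear forms.} Because $\sW$ has finitely many strata and the strata are disjoint, for $x\in V$ one has $x\in\Sing_{\sW}f$ if and only if $df_{|V}(x)=0$; hence $\Sing_{\sW}(\ell,f)\setminus\Sing_{\sW}f=\bigcup_{V}\bigl(\Sing(\ell,f)_{|V}\setminus\Sing f_{|V}\bigr)$, and taking closures of this finite union gives $\Gamma(\ell,f)=\bigcup_{V}\Gamma_{V}(\ell,f)$ with $\Gamma_{V}(\ell,f):=\overline{\Sing(\ell,f)_{|V}\setminus\Sing f_{|V}}$. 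Only the positive-dimensional strata matter. Since a finite intersection of Zariski open dense subsets of $\check\bP^{N-1}$ is again Zariski open dense, it suffices to treat each $V$ separately and then intersect the resulting parameter sets.

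First I would set up, for a fixed positive-dimensional stratum $V$ on which $f_{|V}$ is non-constant, the relative conormal (incidence) variety
\[ I_{V}:=\overline{\{(x,[\ell])\in V\times\check\bP^{N-1}\mid df_{|V}(x)\ne 0,\ \ell_{|T_{x}(f_{|V}^{-1}(f(x)))}=0\}}. \]
The defining condition is exactly $\rank\Jac(\ell_{|V},f_{|V})<2$ together with $df_{|V}(x)\ne 0$, so the fibre $\pi^{-1}([\ell])$ of the projection $\pi\colon I_{V}\to\check\bP^{N-1}$ equals, for generic $[\ell]$, the polar curve $\Gamma_{V}(\ell,f)$. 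Over the Zariski open dense locus $\{df_{|V}\ne 0\}\subset V$ the fibre of $I_{V}\to V$ is the projective space of forms annihilating the $(\dim V-1)$-dimensional tangent space to a fibre, of dimension $(N-1)-(\dim V-1)$; hence $\dim I_{V}=N$, independently of $V$.

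Next I would apply the theorem on the dimension of the fibres of a morphism to $\pi$. If $\pi$ is dominant, then over a Zariski open dense $\Omega'_{V}\subset\check\bP^{N-1}$ the fibre has pure dimension $\dim I_{V}-(N-1)=1$, so $\Gamma_{V}(\ell,f)$ is a curve for all $\ell\in\Omega'_{V}$. If $\pi$ is not dominant, its image lies in a proper closed $Z_{V}\subsetneq\check\bP^{N-1}$ and $\Gamma_{V}(\ell,f)=\varnothing$ for all $\ell\in\Omega'_{V}:=\check\bP^{N-1}\setminus Z_{V}$ (the example $f=x^{3}$ of the Introduction realises this empty case). Intersecting the finitely many $\Omega'_{V}$ produces $\Omega'$ and the stated dichotomy: $\Gamma(\ell,f)$ is a curve if some stratum is in the dominant case, and is empty otherwise.

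For the refined statement I work inside the dominant case and shrink $\Omega'$. Reducedness is generic smoothness: over $\bC$ the generic fibre of the dominant morphism $\pi$ is smooth, hence reduced, on a Zariski open dense $\Omega\subset\Omega'$. The one-to-one property of $(\ell,f)_{|\Gamma(\ell,f)}$ is the delicate point and where I expect the main work: I would form the universal double-point locus of triples $(x,x',[\ell])$ with $x\ne x'$ both in $\pi^{-1}([\ell])$ and $\ell(x)=\ell(x')$, $f(x)=f(x')$, and show by a dimension count that it projects to a proper subvariety of $\check\bP^{N-1}$. The obstacle is that the naive count lands exactly on the boundary dimension $N-1$, so the strict drop---i.e.\ the genericity of $\ell$ that kills all double points---requires a genuine general-position (transversality) argument; controlling this, together with the behaviour of the closures $I_{V}$ along the boundary strata, is precisely where the Whitney regularity of $\sW$ is used.
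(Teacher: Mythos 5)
First, a remark on the comparison you were asked to make: the paper does \emph{not} prove Lemma \ref{l:polargen} at all. It is stated with the end-of-proof box and attributed to Hamm--L\^e \cite{HL}, Kleiman \cite{Kl} and \cite{Ti-israel}; it is imported from the literature, so your attempt must stand on its own. Its first two parts essentially do: the incidence variety $I_{V}$ with $\dim I_{V}=N$, the fibre-dimension dichotomy (dominant projection gives a curve over a dense open set of $\check \bP^{N-1}$, non-dominant gives the empty case), and generic smoothness in characteristic zero for reducedness are exactly the standard Bertini--Sard mechanism behind the cited results, and are fine modulo routine care (generic fibre of the closure $I_{V}$ versus closure of the generic fibre, and the identification of the fibre scheme structure with the natural structure on $\Gamma_{V}(\ell,f)$).

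The one-to-one statement, however, contains a genuine gap -- one you flag yourself, but which is worse than a missing transversality check. Your double-point locus has expected dimension exactly $N-1$: the pairs $(x,x')$ with $f(x)=f(x')$ form a space of dimension $\dim V+\dim V'-1$, and the linear conditions on $[\ell]$ (annihilating both fibre-tangent spaces and $\ell(x-x')=0$) have expected codimension $\dim V+\dim V'-1$, so even if every general-position hypothesis held, the projection to $\check \bP^{N-1}$ would at best be \emph{generically finite} -- i.e.\ a generic $\ell$ would have finitely many double points -- which is not the same as having none. A count of this shape cannot, even in principle, deliver injectivity. Moreover, since the lemma is a statement about germs at $0$, what must actually be excluded is double points \emph{accumulating at the origin}; this requires analysing the germ of the double-point set along $\{0\}\times\check \bP^{N-1}$ and showing its image lies in a proper closed subvariety, a specialisation argument entirely absent from your sketch (and which is what the cited references supply). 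Finally, your guess that Whitney regularity of $\sW$ is the missing ingredient is misdirected: in this lemma Whitney regularity only serves to make $\Sing_{\sW}(\ell,f)$ closed, and it plays no role in the injectivity of $(\ell,f)_{|\Gamma(\ell,f)}$.
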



We will say that $\ell\in \Omega$  is a ``general linear function''.
If the polar curve germ $\Gamma (\ell,f)$ is not empty then it decomposes as: 
$$\Gamma (\ell,f) = \bigcup_{V\in \sW}\Gamma_{V} (\ell,f)$$
 where  $\Gamma_{V} (\ell,f)$ denotes the union of the irreducible curve components that are included in $V\cup \{0\}$.


\begin{definition}\label{d:genlinear}
Let $f:(X, 0)\to (\bC, 0)$ be a non-constant holomorphic function germ. We say that the family of  holomorphic function germs $f_{\lambda}:= f - \lambda \ell$  is a 
 \emph{general linear Morsification of $f = f_{0}$} if $\ell \in \Omega$ and $\lambda$ is close enough to $0\in \bC$.
\end{definition}
 
From now on we will consider general linear Morsifications  only.  By comparing the definitions of the polar locus $\Gamma_{V} (\ell,f)$ with that of the singular locus $\Sing ({f_{\lambda}}_{|V})$, one concludes that the Morse points of the restriction  ${f_{\lambda}}_{|V}$ belong to $\Gamma_{V} (\ell,f)$.
  
We then consider the following numbers:
 \begin{equation}\label{eq:morsenb}
 m_{V}(f) := \# \bigl\{ \mbox{Morse points of  the restriction } (f_{\lambda})_{|V} \mbox{ which converge to $0$ as } \lambda\to 0\bigr\}.
\end{equation}

From the definition it follows that the numbers $m_{V}(f)$ are independent of the generic $\ell\in \Omega$. We will call them
\emph{the stratified Morse numbers of $f$}.  They have been introduced  in \cite{MT2} in case of isolated singularities. What we prove here is that, surprisingly,  the  same  polar formula \cite[Theorem 3.1]{MT2} holds for the  stratified Morse numbers in the fully general setting of any singular locus of $f$.  However one needs a totally different principle of proof.
\begin{theorem}\label{t:main1}
Let  $f:(X, 0) \to (\bC, 0)$ be some singular non-constant holomorphic function germ,  let $\ell \in \Omega$ be a general linear function,  and let $V\in \sW$ be a positive dimensional stratum. Then:
\begin{equation}\label{eq:main1}
  m_{V}(f) = \mult_{0}\Bigl(\Gamma_{V}(\ell,f), \{f =0\}\Bigr) - \mult_{0}\Bigl(\Gamma_{V}(\ell,f), \{\ell =0\} \Bigr).
\end{equation}
\end{theorem}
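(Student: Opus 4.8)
The plan is to compute $m_V(f)$ by a direct local analysis along the polar curve $\Gamma_V(\ell,f)$, using the one-to-one property of $(\ell,f)_{|\Gamma_V(\ell,f)}$ from Lemma~\ref{l:polargen} to reduce the count of Morse points to a count of tangency points of a pencil of lines with a plane curve germ. This replaces the Milnor-fibre constancy argument, which is unavailable here, by an elementary order-of-vanishing computation on each branch.

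First I would characterize the Morse points analytically. On the positive-dimensional stratum $V$, a point $x\ne 0$ is a critical point of $f_{\lambda|V}=(f-\lambda\ell)_{|V}$ exactly when $df_{|V}(x)=\lambda\, d\ell_{|V}(x)$; this forces $\rank\Jac(\ell_{|V},f_{|V})(x)<2$ with $df_{|V}(x)\ne 0$, so $x$ lies on $\Gamma_V(\ell,f)$. Conversely, along $\Gamma_V(\ell,f)\setminus\{0\}$ the general choice $\ell\in\Omega$ guarantees $d\ell_{|V}\ne 0$, hence there is a unique holomorphic proportionality factor $\mu$ on $\Gamma_V(\ell,f)$ with $df_{|V}=\mu\, d\ell_{|V}$. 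Thus the critical points of $f_{\lambda|V}$ near $0$ are precisely the points of $\{\mu=\lambda\}$ on $\Gamma_V(\ell,f)$, and the one-to-one property of $(\ell,f)_{|\Gamma_V(\ell,f)}$ ensures these occur at distinct values of $(\ell,f)$, separating the branches.

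Next I would compute $\mu$ branchwise. Parametrizing a branch of $\Gamma_V(\ell,f)$ by $s\mapsto\gamma(s)$, $\gamma(0)=0$, write $\ell(\gamma(s))=s^{p}u(s)$ and $f(\gamma(s))=s^{q}w(s)$ with $u(0),w(0)\ne 0$. Contracting $df_{|V}=\mu\, d\ell_{|V}$ with $\gamma'(s)$ gives $(f\circ\gamma)'=\mu\cdot(\ell\circ\gamma)'$, so $\mu(s)=(f\circ\gamma)'/(\ell\circ\gamma)'=s^{q-p}\cdot(\textrm{unit})$. Since $\ell$ is general, $p$ equals the branch multiplicity, which is the minimal order of any germ on the branch, whence $q\ge p$ and the exponent $q-p\ge 0$. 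For generic small $\lambda\ne 0$ the equation $\mu(s)=\lambda$ then has exactly $q-p$ simple roots $s$ near $0$, each tending to $0$ as $\lambda\to 0$. Summing over branches and identifying $\sum\ord_s(f\circ\gamma)=\mult_0(\Gamma_V(\ell,f),\{f=0\})$ and $\sum\ord_s(\ell\circ\gamma)=\mult_0(\Gamma_V(\ell,f),\{\ell=0\})$, via the reducedness of $\Gamma_V(\ell,f)$ from Lemma~\ref{l:polargen}, produces exactly the right-hand side of \eqref{eq:main1} as the number of critical points of $f_{\lambda|V}$ that converge to $0$.

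The step I expect to be the main obstacle is upgrading ``critical point'' to ``stratified Morse point''. Differentiating $(f_\lambda\circ\gamma)'=(\mu-\lambda)(\ell\circ\gamma)'$ shows that at a root $s_0$ of $\mu=\lambda$ the restriction of $f_\lambda$ to the polar branch has nonvanishing second derivative precisely when $\mu'(s_0)\ne 0$, which holds for generic $\lambda$; this controls the Hessian of $f_{\lambda|V}$ in the polar direction. The genuinely delicate point is the nondegeneracy of the Hessian in the directions of $T_{x_0}V$ transverse to $\Gamma_V(\ell,f)$, where one must exploit the general position of $\ell\in\Omega$ to exclude a degenerate quadratic part and thereby certify that each of the $q-p$ points per branch is a true stratified Morse point, with no others accumulating at $0$. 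Once this bijection between the solutions of $\mu=\lambda$ and the stratified Morse points of $f_\lambda$ on $V$ is secured, $m_V(f)$ equals the claimed difference of polar multiplicities.
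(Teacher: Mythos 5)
Your counting computation is sound and in fact runs parallel to the paper's own argument: you work with Puiseux parametrisations of the branches of $\Gamma_V(\ell,f)$ upstairs, while the paper pushes everything down to $\bC^2$ by the map $(\ell,f)$ (using the one-to-one property of Lemma \ref{l:polargen}) and performs the identical computation on the branches of the discriminant $\Delta_V$; your equation $\mu(s)=\lambda$ on a branch is exactly the paper's tangency equation \eqref{eq:lambdaeq}, both yield $q-p$ solutions converging to $0$, and summing over branches gives the same difference of intersection multiplicities. So the localisation on the polar curve and the order-of-vanishing count are correct.

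The genuine gap is exactly the step you flag and then leave unproven: certifying that each of the $q-p$ critical points of $f_{\lambda|V}$ is a \emph{Morse} point, and conversely that degenerate critical points cannot enter the count, so that the set you enumerate is precisely the one defining $m_V(f)$ in \eqref{eq:morsenb}. Your proposed route --- controlling the Hessian of $f_{\lambda|V}$ on the directions of $T_{x_0}V$ transverse to the polar curve directly from the genericity of $\ell$ --- is not carried out, and it is not an elementary verification: knowing that $(f_\lambda\circ\gamma)''\neq 0$ along the polar branch together with ``$\ell$ generic'' does not by itself exclude a kernel vector of the Hessian transverse to $\gamma$. The paper closes this gap with Lemma \ref{t:polardiscrim-new}, whose proof invokes L\^e's polar formula for the number of vanishing cycles \emph{locally at the point} $p$: since $\Gamma_V(\ell,f)$ is reduced and smooth at $p$, and $\ell_{|V}$ is non-singular and transverse to it there, the Milnor number of $f_{\lambda|V}$ at $p$ equals $\mult_p\bigl(\Gamma_V(\ell,f),\{f_{\lambda|V}=f_\lambda(p)\}\bigr)-1$. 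Hence ``Morse point'' is equivalent to ``contact order exactly $2$'', i.e.\ to $s_0$ being a \emph{simple} root of your equation $\mu=\lambda$ (which holds for generic $\lambda$), while a degenerate critical point would force a multiple root and is thereby automatically excluded. This local application of the polar/attaching formula --- replacing any appeal to constancy of the Milnor fibre, which is indeed unavailable here --- is the missing idea; once it is in place, your branchwise computation completes the proof of \eqref{eq:main1} exactly as in the paper.
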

We tacitly use the convention that the multiplicity $\mult_{0}\left(\Gamma_{V}(\ell,f), * \right)$ is zero  if  $\Gamma_{V}(\ell,f) = \emptyset$. In particular, if $\Gamma_{0,V} (\ell,f)$ is empty
then  $m_{V}(f) =0$.

\subsection{Proof of Theorem \ref{t:main1}}

 Let $f_{\lambda}(x) = f(x) - \lambda \ell(x)$ be a general linear 
Morsification of $f$, for $\ell\in \Omega$. 

 
\subsection{Sets of stratified Morse points}

Our method for computing the numbers $m_{V}(f)$ uses the following  convenient identification of the sets of stratified Morse points.
\begin{lemma}\label{t:polardiscrim-new}
 Let $\ell \in \Omega$, and let $V\in \sW$ be a positive dimensional stratum of $X$.\\
 The set of Morse singularities  of $f_{\lambda}$ on $V$  is:
    $$\Bigl\{ p\in V\cap \Gamma_{V}(\ell,f) \   \bigl\vert  \  \mult_{p}\bigl(\Gamma_{V}(\ell,f), \bigl\{f_{\lambda |V} = f_{\lambda}(p)\bigl\} \bigr) = 2 \Bigr\}.$$
\end{lemma}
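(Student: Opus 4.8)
The plan is to parametrise the reduced polar curve $\Gamma_{V}(\ell,f)$ near a generic point and thereby reduce the two-dimensional intersection-multiplicity condition to a one-variable order-of-vanishing computation along that curve. First I would record the defining relation on the polar locus: since $\Gamma_{V}(\ell,f)$ is the closure of the set where $d(f_{|V})$ and $d(\ell_{|V})$ are linearly dependent, away from $\Sing f_{|V}$ one has $d(f_{|V}) = \mu\, d(\ell_{|V})$ for a well-defined ratio $\mu$ along the curve, using that for $\ell\in\Omega$ the restriction $d(\ell_{|V})$ does not vanish along $\Gamma_{V}(\ell,f)$. Consequently $d(f_{\lambda|V}) = (\mu-\lambda)\,d(\ell_{|V})$ on $\Gamma_{V}(\ell,f)$, so a point $p\in\Gamma_{V}(\ell,f)$ is a critical point of $f_{\lambda|V}$ if and only if $\mu(p)=\lambda$. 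The crucial consequence is that the \emph{full} differential $d(f_{\lambda|V})_{p}$ vanishes exactly when its restriction to the tangent line $T_{p}\Gamma_{V}(\ell,f)$ vanishes, which is what legitimises the reduction to the curve.

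Next I would identify the intersection multiplicity with an order of vanishing. As $\Gamma_{V}(\ell,f)$ is reduced, a generic point $p\neq 0$ is a smooth point carrying a single branch, which I parametrise by an embedding $\gamma$ with $\gamma(t_{0})=p$, so that
\[
\mult_{p}\bigl(\Gamma_{V}(\ell,f),\{f_{\lambda|V}=f_{\lambda}(p)\}\bigr)=\ord_{t_{0}}\bigl(f_{\lambda}(\gamma(t))-f_{\lambda}(p)\bigr).
\]
Writing $L(t)=\ell(\gamma(t))$ and $M(t)=\mu(\gamma(t))$, the chain rule together with the polar relation gives $\tfrac{d}{dt}f_{\lambda}(\gamma(t))=(M(t)-\lambda)L'(t)$, where $L'(t_{0})\neq 0$ for $\ell\in\Omega$. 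Hence this order equals $1$ at a non-critical point, so such points are automatically excluded from the right-hand side of the Lemma, whereas at a critical point ($M(t_{0})=\lambda$) a second differentiation yields $\tfrac{d^{2}}{dt^{2}}f_{\lambda}(\gamma(t))|_{t_{0}}=M'(t_{0})L'(t_{0})$, which is exactly $\mathrm{Hess}(f_{\lambda|V})(p)$ evaluated on $v=\gamma'(t_{0})$.

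It then remains to match nondegeneracy of the Hessian with the condition $M'(t_{0})\neq 0$. Differentiating the polar identity $d(f_{|V})=\mu\, d(\ell_{|V})$ along $\gamma$ produces the vector relation $\mathrm{Hess}(f_{\lambda|V})(p)\,v=M'(t_{0})\,d(\ell_{|V})_{p}$. If $p$ is a Morse point, the Hessian is invertible, so $v\neq 0$ forces $M'(t_{0})\neq 0$ and the order of vanishing is exactly $2$; this gives the inclusion of the set of Morse points into the right-hand side. For the reverse inclusion, $\mult_{p}=2$ forces in particular the first derivative to vanish, hence $\mu(p)=\lambda$, so $p$ is a critical point of $f_{\lambda|V}$; since $f_{\lambda}$ is a general linear Morsification, every critical point on a positive-dimensional stratum is a stratified Morse point, and the two sets coincide.

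The step I expect to be the main obstacle is the clean justification of nondegeneracy rather than mere nondegeneracy \emph{in the polar direction}. A priori, $\mult_{p}=2$ only records that $\mathrm{Hess}(f_{\lambda|V})(p)(v,v)\neq 0$, and degeneracy of the Hessian together with $M'(t_{0})\neq 0$ would force $d(\ell_{|V})_{p}$ to vanish on $\ker\mathrm{Hess}(f_{\lambda|V})(p)$. Excluding this bad event is exactly where the genericity of $\ell\in\Omega$ must be used, and the delicate bookkeeping is to confirm that the requisite non-vanishing of $d(\ell_{|V})$ along $\Gamma_{V}(\ell,f)$ and the resulting nondegeneracy of the critical points are already guaranteed by the Local Polar Curve Lemma and by $f_{\lambda}$ being a bona fide Morsification.
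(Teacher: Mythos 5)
Your proof is correct, but it takes a genuinely different route from the paper's. The paper settles both inclusions in one stroke by quoting the polar formula for the number of vanishing cycles (L\^e \cite{Le}, see also \cite{Ti-israel}): at a point $p$ of the polar curve one has $\mu_p(f_{\lambda|V}) = \mult_{p}\bigl(\Gamma_{V}(\ell,f_{\lambda}), \{f_{\lambda|V}=f_{\lambda}(p)\}\bigr) - \mult_{p}\bigl(\Gamma_{V}(\ell,f_{\lambda}), \{\ell_{|V}=\ell(p)\}\bigr)$, where the second term equals $1$ by transversality of $\ell$; substituting $\mu_p=1$ (Morse point) and $\mu_p=0$ (regular point) gives the multiplicities $2$ and $1$ respectively, and one then replaces $\Gamma_{V}(\ell,f_{\lambda})$ by $\Gamma_{V}(\ell,f)$, these being given by the same equations since the deformation is linear. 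You instead prove exactly the needed instances of that formula by hand: parametrize the smooth branch, use the proportionality $d(f_{|V})=\mu\,d(\ell_{|V})$ along it, and differentiate twice, the key point being the relation $\mathrm{Hess}(f_{\lambda|V})(p)\,v = M'(t_{0})\,d(\ell_{|V})_{p}$, which converts invertibility of the Hessian into $M'(t_{0})\neq 0$. What your route buys is self-containedness (no vanishing-cycle machinery); what the paper's route buys is brevity and the bonus that L\^e's formula gives $\mult_{p}=\mu_{p}+1\geq 3$ at a degenerate critical point, so the converse inclusion could be obtained without any appeal to the Morsification dichotomy. Note that both arguments consume the same genericity inputs (non-vanishing of $d(\ell_{|V})$ along $\Gamma_{V}(\ell,f)$ near $0$, and transversality of the $\ell$-levels to the curve, i.e. $L'(t_{0})\neq 0$), and both, as written, settle the reverse inclusion by invoking the fact that every critical point of $f_{\lambda|V}$ on a positive-dimensional stratum is Morse --- you explicitly, the paper through its parenthetical ``no (Morse) singularity''. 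Consequently the ``main obstacle'' you flag in your final paragraph is not actually an obstacle for the Lemma as stated: once that dichotomy is granted as background (and the paper does grant it), nondegeneracy in the polar direction is all you need; your sketched kernel/genericity argument would only be required if one insisted on reproving the dichotomy itself.
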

\begin{proof} 
 The Morse points of the restriction of $f_{\lambda} = f- \lambda\ell$ to some stratum $V$ satisfy the equations of the polar curve $\Gamma_{V} (\ell,f)$, thus  a Morse point $p=p(\lambda)\in V$  has a trajectory inside $\Gamma_{V}(\ell,f)$ which abuts to the origin as $\lambda\to 0$. 
 
  We consider now the restrictions of all functions to the smooth stratum $V$.  The branches of the polar curve inside $V\cup \{0\}$, that we have denoted by $\Gamma_{V}(\ell,f)$, are non-singular outside $0$, in particular non-singular at $p(\lambda)$ for $\lambda\not= 0$ close enough to 0.  Since $\ell_{|V}$ is general with respect to $f_{\lambda |V}$ at $p$, we may apply the classical results \cite{Le}, \cite{Ti-israel} to the 
singular fibration defined by  the map germ  $(\ell,f_{\lambda})_{|V}$ at $p$.  
  More precisely, the restriction $f_{\lambda |V}$ has a Morse singularity at $p$, thus the function germ $f_{\lambda |V}$ at $p$ has Milnor number equal to 1. As the function germ $\ell_{|V}$ is non-singular  and transversal to the polar curve $\Gamma_{V}(\ell,f)$ at $p$, we have $\mult_{p}\bigl(\Gamma_{V}(\ell,f), \{\ell = \ell(p)\}\bigr) =1$. 
The polar formula for the number of vanishing cycles\footnote{See also \cite{Ti-bouquet}, \cite{Ma}, or the ``comparison theorem''  in \cite{MT1}, \cite{MT2}.} applied at the Morse point $p$ of $f$ takes thus the form:
    \[  1 = \mult_{p}\Bigl(\Gamma_{V}(\ell,f_{\lambda}), \bigl\{ f_{\lambda |V} = f(p)\bigr\} \Bigr) - \mult_{p}\Bigl(\Gamma_{V}(\ell,f_{\lambda}), \bigl\{ \ell_{|V} =  \ell(p)\bigr\} \Bigr).
    \]
 It then follows  that $\mult_{p}\bigl(\Gamma_{V}(\ell,f_{\lambda}), f_{\lambda|V}^{-1}(f(p)\bigr) = 2$. We have the equality of set germs at $p$:  $\Gamma_{V, p}(\ell,f_{\lambda}) = \Gamma_{V, p}(\ell,f)$. Indeed,  the Morse point $p$ of $f_{\lambda}$ is not on $\Sing f$ (by the genericity of $\ell$) and the two sets are defined by the same equations in the neighbourhood of  $p$.  We therefore have shown: $\mult_{p}\bigl(\Gamma_{V}(\ell,f), f_{\lambda|V}^{-1}(f(p)\bigr) = 2$.
 
 Reciprocally, if $f_{\lambda |V}$ has no  (Morse) singularity at $p(\lambda)\in \Gamma_{V}(\ell,f)$ then, since its Milnor number at $p$ is 0,  we have, as above, the equality:
  \[  0 = \mult_{p}\Bigl(\Gamma_{V}(\ell,f_{\lambda}), \bigl\{ f_{\lambda |V} = f(p)\bigr\} \Bigr) - \mult_{p}\Bigl(\Gamma_{V}(\ell,f_{\lambda}), \bigl\{ \ell_{|V} =  \ell(p)\bigr\} \Bigr),
    \]
    which implies  $\mult_{p}\bigl(\Gamma_{V}(\ell,f), \bigl\{ f_{\lambda |V} = f(p)\bigr\}\bigr)=1$ after  replacing $\Gamma_{V}(\ell,f_{\lambda})$ by  $\Gamma_{V}(\ell,f)$ as  explained above. 
        The proof is complete.
\end{proof}

In view of the above proof,  the Morse points of $f_{\lambda}$ on $V$ are precisely \emph{the  tangency points between the polar curve $\Gamma_{V}(\ell,f)$ and the fibres of $f_{\lambda}$}.

\subsection{Reduction to $\bC^{2}$.}
We consider the map germ  $(\ell, f) : (X,0) \to (\bC^{2},0)$. The image by $(\ell, f)$ of the fibre $\{f_{\lambda} = \alpha\}$ is then the line $v -\lambda u =\alpha$, for any $\lambda \in \bC$, where $(u, v)$ denote the coordinates in the target $\bC^{2}$. For some positive dimensional stratum $V\in \sW$, we have the images:
 $$\Delta_{V} := (\ell,f)(\Gamma_{V}(\ell,f)) \ \mbox{ and  } \  \Delta := (\ell,f)(\Gamma(\ell,f)) .$$

   Let also $\gamma\subset \Gamma_{V}(\ell,f)$ denote some polar branch, and let $\delta_{\gamma} := (\ell,f)(\gamma) \subset \Delta_{V}$ be its image.
Since $\ell \in \Omega$,  the restriction  $(\ell, f)_{|} :\Gamma(\ell,f) \to \Delta$ is one-to-one (cf Lemma \ref{l:polargen}), which fact induces the equalities of intersection multiplicities: 
\begin{equation}\label{eq:arraymult}
 \begin{array}{lll}
 \mult _{0}\bigl(\Gamma_{V}(\ell,f),  f_{\lambda}^{-1}(0)\bigr) = \mult_{0}\bigl(\Delta_{V}, \{v -\lambda u=0\} \bigr),\\
 \mult _{0}\bigl(\Gamma_{V}(\ell,f),  \ell^{-1}(0)\bigr) = \mult_{0}\bigl(\Delta_{V}, \{u=0\} \bigr)
 \end{array} 
\end{equation}

Moreover, the map $(\ell, f)$ establishes the following one-to-one correspondence of finite sets in some small enough fixed pointed ball $B^{*}\subset X$ at the the origin, and for $\lambda$ close enough to 0:

$$ \left\{ \begin{array}{l} \mbox{ tangency point between } \\  \Gamma_{V}(\ell,f)
 \mbox{ and the fibres of }  f_{\lambda} \end{array} \right\} \stackrel{\sim}{\longmapsto}
 \left\{ \begin{array}{l} \mbox{ tangency point between } \Delta_{V}  \mbox{ and the fibres } \\
 \mbox{  of the linear function }  g(u,v) := v -\lambda u  \end{array} \right\} .
  $$

This implies that the tangency points of $\Gamma(\ell,f)$ to fibres of $f_{\lambda}$ which converge to the origin as $\lambda \to 0$ are 
in one-to-one correspondence with the tangency points of $\Delta$ to lines $v -\lambda u =\alpha$  which converge to 0 as $\lambda \to 0$. Moreover, this  bijective correspondence falls into one-to-one correspondences for each branch  $\gamma\in \Gamma(\ell,f)$ and its image $\delta_{\gamma}$. 

For any branch $\delta$ of $\Delta\subset \bC^{2}$, let us then define the number:
$$m_{\delta} := \# \left\{ \begin{array}{l} \mbox{points of  tangency of } \delta \mbox{ with  the fibres of }  \\ g(u,v) = v -\lambda u  \mbox{ which converge to } 0 \mbox{ as } \lambda \to 0 \end{array} \right\}.$$

As we have remarked before, in the above definition of  $m_{\delta}$ we understand by``point of tangency''
a point $q\in \Delta \m \{0\}$  such that   $\mult_{q}(\Delta, \{v -\lambda u= \alpha \})= 2$, where $\alpha = v(q) - \lambda u(q)$.

By using \eqref{eq:arraymult} and Lemma \ref{t:polardiscrim-new},  we  then deduce the following equality:

\begin{equation}\label{eq:sum}
 m_{V}(f) = \sum_{\delta \in \Delta_{V}}m_{\delta}.  
\end{equation}
  
\subsection{Computation of $m_{\delta}$}

After \eqref{eq:sum}, computing $m_{V}(f)$ relies on that of  each $m_{\delta}$.
So let $\delta$ be a branch of $\Delta\subset \bC^{2}$, and let $\delta(s) : u= s^{i}, \ v= as^{j} + \hot$ be a local Puiseux parametrisation of it. We have here $j\ge i$ due to the fact that $\ell$ is a general function with respect to $f$, in the sense of the Polar Curve Lemma \ref{l:polargen}.  

A side comment is due here: the strict inequality $j> i$ holds in case $f\in \frak{m}_{X}^{2}$, where $\frak{m}_{X}$ denotes the  maximal ideal of germs of functions on $(X,0)$, and is a consequence of the tangency of $\Delta$ 
to the axis $\{v=0\}$. This fact has been proved in full generality in  \cite{Ti-carrousel}, and it allows to show the existence of a geometric monodromy of $f$ without fixed points, cf \cite{Ti-carrousel}. This had been previously proved by L\^{e} D.T. in \cite{Le} in the smooth case $X = \bC^{n}$.

\

The tangency condition occurring in the definition of $m_{\delta}$, between the parametrised arc $\delta(s)$  and the fibres of  the function $g(u,v) = v -\lambda u$, reads:
\[ \biggl \langle (- \overline{\lambda}, 1) , \bigl(\frac{\d u}{\d s},  \frac{\d v}{\d s}\bigr) \biggr \rangle=0,
\]
where $\overline{\lambda}$ denotes the complex conjugate of $\delta$, and $\langle \cdot , \cdot \rangle$ is the 
Hermitian inner product. This amounts to: 
\begin{equation}\label{eq:lambdaeq}
  - \lambda i s^{i-1} + a j s^{j-1} + \hot =0.
\end{equation}
  
By its definition, the value $m_{\delta}$ equals the number of non-zero solutions of  \eqref{eq:lambdaeq} which converge to 0 when $\lambda \to 0$. Thus,
after  dividing out in \eqref{eq:lambdaeq} by the factor $s^{i-1}$, we obtain the equation:
\[  - \lambda i + a j s^{j-i} + \hot =0
\]
which has precisely $j-i$ solutions that converge to 0 when $\lambda \to 0$.

On the other hand, by the definition of $\delta(s)$ we have :
\[ \mult_{0}\bigl(\delta, \{v=0\}\bigr) = j, \mbox{ and } \mult_{0}\bigl(\delta, \{u=0\}\bigr) = i.\]

We obtain the equality: 
$$m_{\delta} = \mult_{0}(\delta, \{v=0\})  - \mult_{0}(\delta, \{u=0\}).$$

By summing up these equalities over all $\delta \in \Delta_{V}$ as in \eqref{eq:sum}, we get:
\begin{equation}\label{eq:main2}
  m_{V}(f) = \mult_{0}\left(\Delta_{V}, \{v=0\}\right) - \mult_{0}\left(\Delta_{V},  \{u=0\} \right) .
\end{equation}
Using \eqref{eq:arraymult}, the equality \eqref{eq:main2}  lifts by the map $(\ell, f)$ to the equality \eqref{eq:main1}. This finishes our proof of Theorem \ref{t:main1}.

\begin{example}
 Let  $X= \bC^{2}$, and $f(x,y) = x^{k}y$, for some integer $k\ge 2$. The singular locus is $\Sing f = \{ x=0\}$. This is a $D_{\ity}$-singularity in Siersma's list  \cite{Si} of \emph{line singularities}. For $\ell = x+y$, the deformation $f_{\lambda}  := f - \lambda (x+y)$ is a general linear Morsification.
  The polar curve $\Gamma (\ell, f)$ is the line $\{ x= ky\}\subset \bC^{2}$, and by computing the intersection multiplicities of formula \eqref{eq:main1} we easily find that the Morse number of $f$ is $m_{\bC^{2}}(f) = k+1-1=k$, which may be confirmed by a direct computation of the Morse singularities of the function $f_{\lambda}$.
 
 Another example, still from Siersma's list of line singularities \cite{Si}, is  $J_{\ity}$:  $f(x,y) = x^{2} y^{2} + x^{3}$,  with the same singular locus $\Sing f = \{ x=0\}$.  Again $f_{\lambda} := f -\lambda (x+y)$ is a general linear Morsification. Here the polar curve $\Gamma (\ell, f)$ has equation $2xy - 2y^{2}-3x =0$.  Applying Theorem \ref{t:main1}, we get that the Morse number of $f$ is $m_{\bC^{2}}(f)= 6-1 =5$.
  \end{example}


\end{document}